\documentclass[a4paper, reqno, 11pt]{amsart}
\pdfoutput=1
\newif\ifprivate
\privatefalse

\usepackage{a4wide}

\usepackage[utf8]{inputenc}
\usepackage[T1]{fontenc}
\usepackage{lmodern}
\usepackage[draft=false,kerning=true]{microtype}

\usepackage{enumerate}
\usepackage{mathtools}
\usepackage{textcomp}
\usepackage{tikz-cd}
\usetikzlibrary{backgrounds,scopes}
\usetikzlibrary{automata}

\usepackage{hyperref}
\usepackage{todonotes}

\ifprivate
\usepackage[mark]{gitinfo2}
\renewcommand{\gitMark}{\jobname\,\textbullet{}\,\gitFirstTagDescribe\,\textbullet{}\,\gitAuthorName,\,\gitAuthorIsoDate}
\newcommand{\TODO}[1]%
{\par\fbox{\begin{minipage}{0.9\linewidth}\textbf{TODO:} #1\end{minipage}}\par}
\fi

\newcommand{\proofparagraph}[1]{\medskip\par\noindent{\itshape #1.} }

\DeclarePairedDelimiter{\abs}{\lvert}{\rvert}

\newcommand{\C}{\mathbb{C}}

\newcommand{\calD}{\mathcal{D}}

\newcommand{\calV}{\mathcal{V}}
\newcommand{\calX}{\mathcal{X}}

\DeclarePairedDelimiter{\ceil}{\lceil}{\rceil}
\newcommand{\dd}{\mathrm{d}}

\DeclarePairedDelimiterXPP{\f}[2]{\foperator{#1}}(){}{#2}
\DeclarePairedDelimiterXPP{\fexp}[1]{\exp}(){}{#1}
\DeclarePairedDelimiter{\floor}{\lfloor}{\rfloor}
\newcommand{\foperator}[1]{\mathop{{#1}\empty{}}}
\DeclarePairedDelimiter{\fractional}{\{}{\}}
\DeclarePairedDelimiterXPP{\inftynorm}[1]{}{\lVert}{\rVert}{_\infty}{#1}

\DeclarePairedDelimiter{\iverson}{[}{]}

\renewcommand{\MR}[1]{}
\newcommand{\N}{\mathbb{N}}
\DeclarePairedDelimiter{\norm}{\lVert}{\rVert}
\DeclarePairedDelimiterXPP{\Oh}[1]{\foperator{O}}(){}{#1}
\DeclarePairedDelimiterXPP{\oh}[1]{\foperator{o}}(){}{#1}

\newcommand{\R}{\mathbb{R}}

\DeclarePairedDelimiterXPP{\Res}[2]{\operatorname{Res}}(){}{#1, #2}
\DeclarePairedDelimiter{\set}{\{}{\}}
%\DeclarePairedDelimiterX{\setm}[2]{\{}{\}}{#1\,\delimsize\vert\,\mathopen{}#2}
\DeclarePairedDelimiterX{\setm}[2]{\{}{\}}{#1 \colon \mathopen{}#2}

\newcommand{\tpmod}[1]{\ensuremath{\undisp{\pmod{#1}}}}

\makeatletter\newcommand\undisp[1]{\bgroup\@displayfalse #1\egroup}\makeatother

\newcommand{\Z}{\mathbb{Z}}

\newtheorem{theorem}{Theorem}

\newtheorem{lemma}{Lemma}[section]
\newtheorem{proposition}[lemma]{Proposition}

\theoremstyle{remark}

\newtheorem{remark}[lemma]{Remark}

\numberwithin{equation}{section}
\numberwithin{figure}{section}
\numberwithin{table}{section}

\begin{document}
\title[Analysis of Summatory Functions of Regular Sequences]{Esthetic
  Numbers and Lifting Restrictions on the Analysis of Summatory
  Functions of Regular Sequences}

\author[C.~Heuberger]{Clemens Heuberger}
\address{Clemens Heuberger,
  Institut f\"ur Mathematik, Alpen-Adria-Universit\"at Klagenfurt,
  Universit\"atsstra\ss e 65--67, 9020 Klagenfurt am W\"orthersee, Austria}
\email{\href{mailto:clemens.heuberger@aau.at}{clemens.heuberger@aau.at}}

\author[D.~Krenn]{Daniel Krenn}
\address{Daniel Krenn,
  Institut f\"ur Mathematik, Alpen-Adria-Universit\"at Klagenfurt,
  Universit\"atsstra\ss e 65--67, 9020 Klagenfurt am W\"orthersee, Austria}
\email{\href{mailto:math@danielkrenn.at}{math@danielkrenn.at} \textit{or}
  \href{mailto:daniel.krenn@aau.at}{daniel.krenn@aau.at}}

\thanks{C.~Heuberger and D.~Krenn are supported by the
   Austrian Science Fund (FWF): P\,28466-N35.}

\keywords{
  Regular sequence,
  Mellin--Perron summation,
  summatory function,
  Tauberian theorem,
  esthetic numbers%
}

\subjclass[2010]{%
05A16; %Combinatorics: Enumerative combinatorics: asymptotic enumeration
11A63, %Radix representation; digital problems
68Q45, %Formal languages and automata
68R05%  %discrete mathematics in relation to computer science: combinatorics
}

\begin{abstract}
  When asymptotically analysing the summatory function of a
  $q$-regular sequence in the sense of Allouche and Shallit, the
  eigenvalues of the sum of matrices of the linear representation of
  the sequence determine the ``shape'' (in particular the growth) of
  the asymptotic formula. Existing general results for determining the
  precise behavior (including the Fourier coefficients of the
  appearing fluctuations) have previously been restricted
  by a technical condition on these eigenvalues.

  The aim of this work is to lift these restrictions by providing an
  insightful proof based on generating functions for the main pseudo
  Tauberian theorem for all cases simultaneously. (This theorem is the
  key ingredient for overcoming convergence problems in Mellin--Perron
  summation in the asymptotic analysis.)

  One example is discussed in more detail: A precise asymptotic
  formula for the amount of esthetic numbers in the first~$N$ natural
  numbers is presented. Prior to this only the asymptotic amount of
  these numbers with a given digit-length was known.
\end{abstract}

\maketitle

\thispagestyle{empty}
\setcounter{page}{0}

\clearpage

\section{Introduction}

This extended abstract studies the asymptotic behaviour of summatory functions
of $q$-regular sequences. We start with a definition of $q$-regular sequences.

\subsection{\texorpdfstring{$q$}{q}-Regular Sequences}\label{section:introduction:regular-sequences}
An introduction and formal definition of $q$-regular sequences (via
the so-called $q$-kernel) is given by Allouche and
Shallit~\cite{Allouche-Shallit:1992:regular-sequences} and
\cite[Chapter~16]{Allouche-Shallit:2003:autom}. We settle here for an
equivalent formulation which is the most useful for our
considerations.

Let $q\ge 2$ be a
fixed integer and $(x(n))_{n\ge 0}$ be a sequence.
Then
$(x(n))_{n\ge 0}$ is $q$-regular if and only if there exists a vector valued
sequence $(v(n))_{n\ge 0}$ whose first component coincides with
$(x(n))_{n\ge 0}$ and there exist square matrices $A_0$, \ldots, $A_{q-1}\in\C^{d\times d}$ such that
\begin{equation}\label{eq:linear-representation}
  v(qn+r) = A_r v(n)\qquad\text{for $0\le r<q$, $n\ge 0$;}
\end{equation}
see Allouche and
Shallit~\cite[Theorem~2.2]{Allouche-Shallit:1992:regular-sequences}.
This is called a \emph{$q$-linear representation} of $x(n)$.

We note that a linear representation~\eqref{eq:linear-representation}
immediately leads to an explicit expression for $x(n)$ by induction:
Let $r_{\ell-1}\ldots r_0$ be the $q$-ary digit expansion of $n$. Then
\begin{equation*}
  x(n) = e_1 A_{r_0}\dotsm A_{r_{\ell-1}}v(0)
\end{equation*}
where $e_1=\begin{pmatrix}1& 0& \dotsc& 0\end{pmatrix}$.

Regular sequences are related to
divide-and-conquer algorithms, therefore they have been intensively
investigated in the literature in many particular cases; see, for example,
\cite{Drmota-Szpankowski:2013:divide-and-conquer},
\cite{Dumas:2013:joint},
\cite{Dumas:2014:asymp},
\cite{Dumas-Lipmaa-Wallen:2007:asymp}
\cite{Grabner-Heuberger:2006:Number-Optimal},
\cite{Grabner-Heuberger-Prodinger:2005:counting-optimal-joint},
\cite{Heuberger-Krenn-Prodinger:2018:pascal-rhombus},
\cite{Heuberger-Kropf-Prodinger:2015:output} and
\cite{Hwang-Janson-Tsai:2017:divide-conquer-half}
for a more detailed overview.
The best-known example for a $2$-regular function is the binary sum-of-digits
function.

\subsection{Summatory Functions}

Of particular
interest is the analysis of the summatory function (i.e., the sequence
of partial sums) of a regular sequence, not least because of its
relation to the expectation of a random element of the sequence (with
respect to uniform distribution on the nonnegative integers smaller
than a certain $N$).
In \cite{Heuberger-Krenn-Prodinger:2018:pascal-rhombus}, Prodinger and
the two authors of this extended abstract provide a theorem
decomposing the summatory function into periodic fluctuations
multiplied by some scaling functions; the Fourier coefficients of
these periodic fluctuations are provided as well. Although this result
is quite general, the proof in
\cite{Heuberger-Krenn-Prodinger:2018:pascal-rhombus} imposes a
restriction on the asymptotic growth. One major aim of this work is to
lift this restriction by completely getting rid of the corresponding
technical condition. We formulate the full main theorem in Section~\ref{sec:asy-summatory} and
the theorem stating the underlying pseudo-Tauberian
argument in Section~\ref{sec:pseudo-tauber}.

\subsection{The Proof}

The proof of the extended pseudo-Tauberian theorem contained in this
extended abstract not only covers the previously excluded cases, but also works
for the existing theorem in
\cite{Heuberger-Krenn-Prodinger:2018:pascal-rhombus}. In particular
the proof of the main result does not need a case distinction, but the
contained proof supersedes the existing one.
(Besides, it also is much shorter.) This is reached by
changing the perspective to a more general point of view; we use a
generating functions approach. Beside proving the theorem, this also
gives additional insights. For example, the cancellations in the proof
in \cite{Heuberger-Krenn-Prodinger:2018:pascal-rhombus} seem to be a
kind of magic at that point, but with the new approach, it is now
clear and no surprise anymore that they have to appear.

\subsection{Esthetic Numbers}

A further main contribution of this extended abstract is the
precise asymptotic analysis of $q$-esthetic numbers, see~De~Koninck
and Doyon~\cite{Koninck-Doyon:2009:esthetic-numbers}. These are
numbers whose $q$-ary digit expansion satisfies the condition that
neighboring digits differ by exactly one. The sequence of such numbers
turns out to be $q$-automatic, thus are $q$-regular and can also be
seen as an output sum of a transducer; see the first author's joint
work with Kropf and
Prodinger~\cite{Heuberger-Kropf-Prodinger:2015:output}. However, the
asymptotics obtained by using the main result of
\cite{Heuberger-Kropf-Prodinger:2015:output}---in fact, this result is
recovered as a corollary of the main result of
\cite{Heuberger-Krenn-Prodinger:2018:pascal-rhombus}---is degenerated
in the sense that the provided main term and second order term both
equal zero. On the other hand, using a more direct approach via our
main theorem brings up the actual main term and the fluctuation in
this main term. The full theorem is formulated in
Section~\ref{sec:esthetic-numbers}.
Prior to this precise analysis,
the authors of~\cite{Koninck-Doyon:2009:esthetic-numbers} only performed an analysis
of esthetic numbers by digit-length (and not by the number itself).

The approach used in the analysis of $q$-esthetic numbers can easily
be adapted to numbers defined by other conditions on the word of
digits of their $q$-ary expansion.

\subsection{Dependence on Residue Classes}

The analysis of $q$-esthetic numbers also brings another aspect into
the light of day, namely a quite interesting dependence of the
behaviour with respect to~$q$ on different moduli:
\begin{itemize}
\item The dimensions in the matrix approach of
  \cite{Koninck-Doyon:2009:esthetic-numbers} need to be increased for
  certain residue classes of~$q$ modulo~$4$ in order to get a
  formulation as a $q$-automatic and $q$-regular sequence,
  respectively.
\item The main result in~\cite{Koninck-Doyon:2009:esthetic-numbers}
  already depends on the parity of $q$ (i.e., on $q$
  modulo~$2$). This reflects our Theorem~\ref{theorem:esthetic:asy}
  by having $2$-periodic
  fluctuations (in contrast to $1$-periodic fluctuations in the main
  Theorem~\ref{theorem:simple}).
\item Surprisingly, the error term in the resulting formula of
  Theorem~\ref{theorem:esthetic:asy} depends on the residue class of $q$ modulo~$3$. This is
  due to the appearance of an eigenvalue~$1$ in certain cases.
\item As an interesting side-note: In the same (up to this point not
  specified; see below) spectrum, the algebraic multiplicity of the
  eigenvalue~$0$ changes again only modulo~$2$.
\end{itemize}
The spectrum above consists of the eigenvalues of the sum of matrices
of the $q$-linear representation of the sequence.

\subsection{Symmetrically Arranged Eigenvalues}

The second of the four bullet points above comes from a particular
configuration in the spectrum. Whenever eigenvalues are arranged as
vertices of a regular polygon, then their influence can be collected;
this results in periodic fluctuations with larger period than~$1$.
We elaborate on the influence of such eigenvalues in
Section~\ref{sec:symmetric}.
This is then used in the particular case of esthetic
numbers, but might also be used in conjunction with the output sum
of transducers; to be precise, for obtaining the second order term in
the main result of~\cite{Heuberger-Kropf-Prodinger:2015:output}.

\section{Esthetic Numbers}
\label{sec:esthetic-numbers}
Let again be $q\geq2$ a fixed integer. We call a nonnegative integer~$n$ a
\emph{$q$-esthetic number} (or simply an \emph{esthetic number}) if its
$q$-ary digit expansion $r_{\ell-1} \dots r_0$ satisfies
$\abs{r_j - r_{j-1}} = 1$ for all $j\in\set{1,\dots,\ell-1}$;
see~De~Koninck and Doyon~\cite{Koninck-Doyon:2009:esthetic-numbers}.

In~\cite{Koninck-Doyon:2009:esthetic-numbers} the authors count
$q$-esthetic numbers with a given length of their $q$-ary digit
expansion. They provide an exact as well as an asymptotic formula for
these counts. We aim for a more precise analysis and head for an
asymptotic description of the amount of $q$-esthetic numbers up the an
arbitrary value~$N$ (in contrast to only powers of~$q$
in~\cite{Koninck-Doyon:2009:esthetic-numbers}).

\subsection{A $q$-Linear Representation}

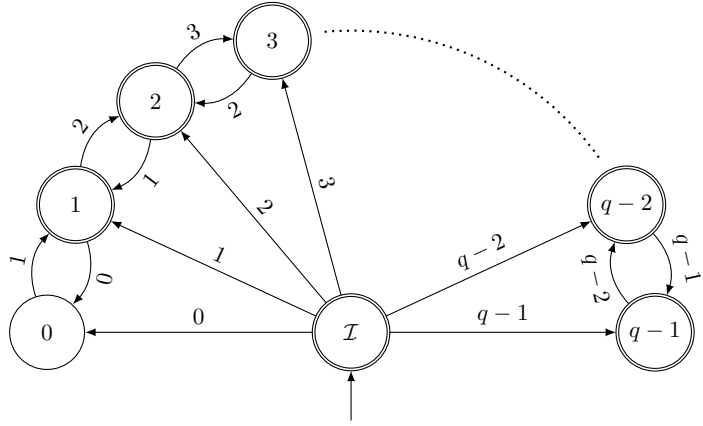
\begin{figure}
  \centering

  \begin{tikzpicture}[auto,
    initial text=, initial distance=5ex,
    >=latex,
    accepting text=,
    every state/.style={minimum size=3.2em},
    scale=0.8,
    every node/.style={scale=0.8}]

    \node[state, initial below, accepting] (I) at (0.000000, 0.000000) {$\mathcal{I}$};

    \node[state] (e0) at (180:5) {$0$};
    \node[state, accepting] (e1) at (155:5) {$1$};
    \node[state, accepting] (e2) at (130:5) {$2$};
    \node[state, accepting] (e3) at (105:5) {$3$};

    \draw[dotted, thick] (95:5) arc (95:35:5);

    \node[state, accepting] (eq2) at (25:5) {$q-2$};
    \node[state, accepting] (eq1) at (0:5) {$q-1$};

    \path[->] (I) edge node[rotate=0, anchor=south] {$0$} (e0);
    \path[->] (I) edge node[rotate=-25, anchor=south] {$1$} (e1);
    \path[->] (I) edge node[rotate=-50, anchor=south] {$2$} (e2);
    \path[->] (I) edge node[rotate=-75, anchor=south] {$3$} (e3);
    \path[->] (I) edge node[rotate=25, anchor=south] {$q-2$} (eq2);
    \path[->] (I) edge node[rotate=0, anchor=south] {$q-1$} (eq1);

    \path[->] (e0) edge[bend left] node[rotate=77.5, anchor=south] {$1$} (e1);
    \path[->] (e1) edge[bend left] node[rotate=77.5, anchor=north] {$0$} (e0);
    \path[->] (e1) edge[bend left] node[rotate=52.5, anchor=south] {$2$} (e2);
    \path[->] (e2) edge[bend left] node[rotate=52.5, anchor=north] {$1$} (e1);
    \path[->] (e2) edge[bend left] node[rotate=27.5, anchor=south] {$3$} (e3);
    \path[->] (e3) edge[bend left] node[rotate=27.5, anchor=north] {$2$} (e2);

    \path[->] (eq2) edge[bend left] node[rotate=-77.5, anchor=south] {$q-1$} (eq1);
    \path[->] (eq1) edge[bend left] node[rotate=-77.5, anchor=north] {$q-2$} (eq2);

  \end{tikzpicture}
  \vspace*{-1em}

  \caption{Automaton~$\mathcal{A}$ recognizing esthetic numbers.}
  \label{fig:esthetic-automaton}
\end{figure}

The language consisting of the $q$-ary digit expansions (seen as words
of digits) which are $q$-esthetic
is a regular language, because it is recognized by the
automaton~$\mathcal{A}$ in
Figure~\ref{fig:esthetic-automaton}. Therefore, the indicator sequence
of this language, i.e., the $n$th entry is $1$ if $n$ is $q$-esthetic
and $0$ otherwise is a $q$-automatic sequence and therefore also
$q$-regular. Let us name this sequence~$x(n)$.

Let $A_0$, \dots, $A_{q-1}$ be the transition matrices of the
automaton~$\mathcal{A}$, i.e., $A_r$ is the adjacency matrix of the
directed graph induced by a transition with digit~$r$.
To make this more explicit, we have
the following $(q+1)$-dimensional square
matrices: Each row and column corresponds to the states~$0$, $1$,
\dots, $q-1$, $\mathcal{I}$. In matrix~$A_r$, the only nonzero entries
are in column~$r\in\set{0,1,\dots,q-1}$, namely $1$ in the rows~$r-1$ and $r+1$ (if
available) and in row~$\mathcal{I}$ as there are transitions from
these states to state~$r$ in the automaton~$\mathcal{A}$.

Let us make this more concrete by considering $q=4$. We obtain the matrices
\begin{align*}
  A_0 &=
   \scalebox{0.8}{$\begin{pmatrix}
    0 & 0 & 0 & 0 & 0 \\
    1 & 0 & 0 & 0 & 0 \\
    0 & 0 & 0 & 0 & 0 \\
    0 & 0 & 0 & 0 & 0 \\
    1 & 0 & 0 & 0 & 0
  \end{pmatrix}$},
  &
  A_1 &=
  \scalebox{0.8}{$\begin{pmatrix}
    0 & 1 & 0 & 0 & 0 \\
    0 & 0 & 0 & 0 & 0 \\
    0 & 1 & 0 & 0 & 0 \\
    0 & 0 & 0 & 0 & 0 \\
    0 & 1 & 0 & 0 & 0
  \end{pmatrix}$},
  &
  A_2 &=
  \scalebox{0.8}{$\begin{pmatrix}
    0 & 0 & 0 & 0 & 0 \\
    0 & 0 & 1 & 0 & 0 \\
    0 & 0 & 0 & 0 & 0 \\
    0 & 0 & 1 & 0 & 0 \\
    0 & 0 & 1 & 0 & 0
  \end{pmatrix}$},
  &
  A_3 &=
  \scalebox{0.8}{$\begin{pmatrix}
    0 & 0 & 0 & 0 & 0 \\
    0 & 0 & 0 & 0 & 0 \\
    0 & 0 & 0 & 1 & 0 \\
    0 & 0 & 0 & 0 & 0 \\
    0 & 0 & 0 & 1 & 0
  \end{pmatrix}$}.
\end{align*}

We are almost at a $q$-linear representation of our sequence; we still
need vectors on both sides of the matrix products. We have
\begin{equation*}
  x(n) = e_{q+1}\, A_{r_0} \cdots A_{r_{\ell-1}} v(0)
\end{equation*}
for $r_{\ell-1} \dots r_0$ being the $q$-ary expansion of~$n$ and chosen vectors
$e_{q+1}=\begin{pmatrix}0& \dotsc& 0&1\end{pmatrix}$ and
$v(0)=\begin{pmatrix}0&1& \dotsc& 1\end{pmatrix}^\top$.
Strictly speaking, this is not yet a regular sequence: in the case of regular
sequences, we always have that $A_0 v(0)=v(0)$ which is not the case here. This
does not matter in this case: the difference leads to an additional constant in
the asymptotic analysis which is absorbed by the error term anyway.

To see that the above holds, we have two different interpretations:
The first is that the row vector
$w(n) = e_{q+1}\, A_{r_0} \cdots A_{r_{\ell-1}}$
is the unit vector corresponding to the most significant digit
of the $q$-ary expansion of~$n$ or, in view of the
automaton~$\mathcal{A}$, corresponding to the final state.
Note that we read the digit expansion from the least significant digit
to the most significant one
(although it would be possible the other way round as well).
We have $w(0)=e_{q+1}$
which corresponds to the empty word and
being in the initial state~$\mathcal{I}$ in the automaton.
The vector~$v(0)$ corresponds to the fact that
all states of~$\mathcal{A}$ except~$0$ are accepting.

The other interpretation is: The $r$th component of the column vector
$v(n) = A_{r_0} \cdots A_{r_{\ell-1}} v(0)$
has the following two meanings:
\begin{itemize}
\item In the automaton~$\mathcal{A}$, we start in state $r$ and then
  read the digit expansion of $n$. The $r$th component is then the indicator
  function whether we remain esthetic, i.e., end in an accepting
  state.
\item To a word ending with $r$ we append the digit expansion of
  $n$. The $r$th component is then the indicator function whether the result
  is an esthetic word.
\end{itemize}

At first glance, our problem here seems to be a special case of the
transducers studied in~\cite{Heuberger-Kropf-Prodinger:2015:output}. However, the
automaton~$\mathcal{A}$ is not complete. Adding a sink to have a
formally complete automaton, however, adds an eigenvalue $q$ and thus
a much larger dominant asymptotic term, which would then be multiplied
by~$0$. Therefore, the results
of~\cite{Heuberger-Kropf-Prodinger:2015:output} do not apply to this
case here.

\subsection{Full Asymptotics}

We now formulate our main result for the amount of esthetic numbers
smaller than a given integer~$N$. We abbreviate this amount by
$X(N) = \sum_{0 \le n < N} x(n)$
and have the following theorem.

\begin{theorem}
  \label{theorem:esthetic:asy}
  The number~$X(N)$ of $q$-esthetic numbers smaller than $N$ is
  \begin{multline}\label{eq:esthetic:asy-main}
    X(N) = \sum_{j\in\set{1,2,\dots,\ceil{\frac{q-2}{3}}}}
    N^{\log_q (2\cos(j\pi/(q+1)))} \Phi_{qj}(2\fractional{\log_{q^2} N})
    + \Oh[\big]{(\log N)^{\iverson{q \equiv -1 \tpmod 3}}}
  \end{multline}
  with $2$-periodic continuous functions~$\Phi_{qj}$.
  Moreover, we can effectively compute the Fourier coefficients of
  each~$\Phi_{qj}$.
  If $q$ is even, then the functions $\Phi_{qj}$ are actually $1$-periodic.
\end{theorem}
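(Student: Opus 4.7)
The plan is to apply the main asymptotic theorem (Theorem~\ref{theorem:simple} of Section~\ref{sec:asy-summatory}) to the $q$-linear representation $(e_{q+1};A_0,\dots,A_{q-1};v(0))$ of $x(n)$ constructed above. Since the shape of the resulting expansion is governed by the spectrum of the matrix $B:=A_0+\dots+A_{q-1}$, the first task is to identify that spectrum.

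Inspecting the matrices~$A_r$ shows that $B$ has the block form
\begin{equation*}
  B = \begin{pmatrix} P_q & 0 \\ \mathbf{1}^\top & 0 \end{pmatrix},
\end{equation*}
where $P_q$ denotes the adjacency matrix of the path graph on the digit-states $0,1,\dots,q-1$, the $\calI$-column vanishes, and the $\calI$-row is the all-ones vector. Block-triangularity together with the classical path-graph spectrum then gives
\begin{equation*}
  \operatorname{spec}(B) \;=\; \{0\} \cup \setm{\lambda_j = 2\cos\tfrac{j\pi}{q+1}}{1\le j\le q},
\end{equation*}
which is symmetric around the origin since $\lambda_{q+1-j}=-\lambda_j$. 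An elementary inequality shows that $\abs{\lambda_j}>1$ holds exactly for $1\le j\le\ceil{(q-2)/3}$.

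Next I would feed this spectral data into the framework for \emph{symmetrically arranged eigenvalues} developed in Section~\ref{sec:symmetric}. Each pair $\{+\lambda_j,-\lambda_j\}$ forms a two-element orbit under negation, and the section's result bundles the two contributions into a single term of order $N^{\log_q\lambda_j}$ whose fluctuation has period~$2$ in $\log_{q^2}N$; this is precisely the summand $N^{\log_q(2\cos(j\pi/(q+1)))}\Phi_{qj}(2\fractional{\log_{q^2}N})$ of~\eqref{eq:esthetic:asy-main}. The Fourier coefficients of $\Phi_{qj}$ are then effectively computable since they appear as explicit residues of the Dirichlet series attached to the linear representation.

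It remains to control the residual contribution and to prove the even-$q$ refinement. A short modular calculation shows that $2\cos(j\pi/(q+1))=\pm 1$ appears in $\operatorname{spec}(B)$ exactly when $3\mid q+1$, i.e.\ $q\equiv -1\pmod 3$; this eigenvalue contributes the extra $\log N$ factor, while the remaining eigenvalues lie strictly inside the unit disc and are absorbed into $\Oh{1}$. The step I expect to be the main obstacle is the final refinement for even $q$, where the nominally $2$-periodic $\Phi_{qj}$ must be shown to be $1$-periodic. To establish this, I would track the explicit path-graph eigenvectors $\phi^{(j)}_k=\sin(jk\pi/(q+1))$ and the symmetry $\phi^{(q+1-j)}_k=(-1)^{k+1}\phi^{(j)}_k$ through the initial vector $v(0)=(0,1,\dots,1)^\top$ and the projection $e_{q+1}$, and verify by a parity argument that for even~$q$ the odd-frequency Fourier coefficients of $\Phi_{qj}$ vanish.
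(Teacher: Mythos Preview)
Your outline follows the paper's own proof closely: block-triangular form of $C=A_0+\dots+A_{q-1}$, the path-graph eigenvalues $\lambda_j=2\cos(j\pi/(q+1))$, the pairing $\lambda_{q+1-j}=-\lambda_j$ handled via Proposition~\ref{proposition:symmetric-eigenvalues}, and an eigenspace/parity argument for the even-$q$ refinement. The paper phrases that last point as the statement that for even $q$ the row vector $e_{q+1}$ lies only in the sum of the left eigenspaces for \emph{odd} $j$, so the contributions of $-\lambda_j=\lambda_{q+1-j}$ (even index) vanish outright; your formulation via vanishing odd-frequency Fourier coefficients is an equivalent way to say the same thing.

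There is, however, one genuine omission. Theorem~\ref{theorem:simple} does not split the spectrum at $\abs{\lambda}=1$; it splits at $\abs{\lambda}=R$, where $R$ is a chosen upper bound for the joint spectral radius of $A_0,\dots,A_{q-1}$, and it produces the error term $\Oh{N^{\log_q R}(\log N)^{\max\setm{m(\lambda)}{\abs{\lambda}=R}}}$. Your argument tacitly takes $R=1$---that is what makes the cut-off $j\le\ceil{(q-2)/3}$ correct and what reduces the error to $\Oh{(\log N)^{\iverson{q\equiv -1\tpmod 3}}}$---but you never verify that $R=1$ is admissible. The paper supplies this step explicitly: each $A_r$ has maximum absolute row sum $1$, so every product $A_{r_1}\cdots A_{r_\ell}$ has norm at most $1$ and one may take $R=1$; conversely, alternating esthetic words $(r-1)r(r-1)\cdots$ show that the joint spectral radius is exactly $1$, so no smaller $R$ is available. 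Without at least the upper bound, neither the range of the main sum nor the shape of the error term in~\eqref{eq:esthetic:asy-main} is justified.
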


If $q=2$, then the theorem results in $X(N)=\Oh{\log N}$.
However, for each length, the only word of digits satisfying the
esthetic number condition has alternating digits $0$ and $1$,
starting with~$1$ at its most significant digit. The
corresponding numbers~$n$ are the
sequence~\href{https://oeis.org/A000975}{A000975}
(``Lichtenberg sequence'') in The On-Line
Encyclopedia of Integer Sequences~\cite{OEIS:2018}.

Back to a general~$q$: For the asymptotics,
the main quantities influencing the growth will turn out to be the
eigenvalues of the matrix~$C = A_0+\dots+A_{q-1}$. Continuing our
example $q=4$ above, this matrix is
\begin{equation*}
  C = A_0 + A_1 + A_2 + A_3 =
  \scalebox{0.8}{$\begin{pmatrix}
    0 & 1 & 0 & 0 & 0 \\
    1 & 0 & 1 & 0 & 0 \\
    0 & 1 & 0 & 1 & 0 \\
    0 & 0 & 1 & 0 & 0 \\
    1 & 1 & 1 & 1 & 0
  \end{pmatrix}$},
\end{equation*}
and its eigenvalues are
$\pm 2\cos(\frac{\pi}{5})=\pm \frac12\bigl(\sqrt{5} + 1\bigr) = \pm1.618\dots$,
$\pm 2\cos(\frac{2\pi}{5})=\pm \frac12\bigl(\sqrt{5} - 1\bigr) = \pm0.618\dots$
and $0$, all with algebraic and geometric multiplicity $1$. Therefore it turns out that
the growth of the main term is
$N^{\log_4(\sqrt{5} + 1) - \frac12}=N^{0.347\dots}$, see
Figure~\ref{fig:fluct-esthetic}

\begin{figure}
  \centering
  \includegraphics{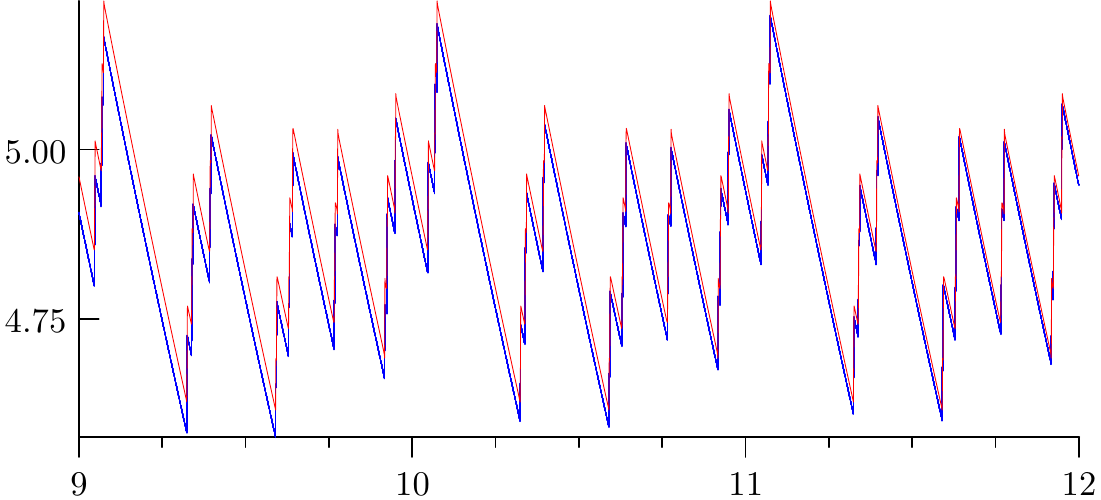}
  \caption{Fluctuation in the main term of the asymptotic expansion of $X(N)$
    for $q=4$.
    The figure shows $\f{\Phi_1}{u}$ (red) approximated by
    its trigonometric polynomial of degree~$1999$ as well as
    $X(4^u) / N^{u(\log_4(\sqrt{5} + 1) - \frac12)}$ (blue).}
  \label{fig:fluct-esthetic}
\end{figure}

The proof of Theorem~\ref{theorem:esthetic:asy} can be found in
Appendix~\ref{sec:proof-esthetic}.

\def\moveestheticproof{
\begin{proof}[Proof of Theorem~\ref{theorem:esthetic:asy}]
  We work out the conditions and parameters for using
  Theorem~\ref{theorem:simple}.

  \proofparagraph{Joint Spectral Radius}
  As all the square matrices $A_0$, \dots, $A_{q-1}$ have a maximum
  absolute row sum norm equal to $1$, the joint spectral radius of
  these matrices is bounded by~$1$.

  Let $r\in\set{1,\dots,q-1}$. Then any product with alternating
  factors $A_{r-1}$ and $A_r$, i.e., a finite product
  $A_{r-1}A_rA_{r-1}\cdots$, has absolute row sum norm at least~$1$ as
  the word $(r-1)r(r-1)\dots$ is $q$-esthetic. Therefore the joint
  spectral radius of $A_{r-1}$ and $A_r$ is at
  least~$1$. Consequently, the joint spectral radius of $A_0$, \dots,
  $A_{q-1}$ equals~$1$.

  \proofparagraph{Eigenvalues}
  The matrix $C = A_0+\dots+A_{q-1}$ has a block decomposition into
  \begin{equation*}
    C = 
    \left(\begin{array}{c|c}
      M & \mathbf{0} \\
      \hline
      \mathbf{1} & 0
    \end{array}\right)
  \end{equation*}
  for vectors~$\mathbf{0}$ (vector of zeros) and $\mathbf{1}$
  (vector of ones) of suitable dimension.
  Therefore, one eigenvalue of~$C$ is~$0$ and the others are the eigenvalues
  of~$M$ which are the zeros
  \begin{equation*}
    \lambda_j = 2 \cos\biggl(\frac{j\pi}{q+1}\biggr)
    \quad\text{for $j\in\set{1,\dots,q}$}
  \end{equation*}
  of the polynomials $p_q(x)$ which are recursively defined by $p_0(x)=1$,
  $p_1(x)=x$ and $p_\ell(x) = x p_{\ell-1}(x) - p_{\ell-2}(x)$ for $\ell\geq2$;
  see \cite[Sections~4 and~5]{Koninck-Doyon:2009:esthetic-numbers}.
  Note that up to replacing $x$ by $2x$, these polynomials~$p_\ell$
  are the Chebyshev polynomials of the second kind. This is not
  surprising: Chebyshev polynomials are frequently occurring phenomena
  in lattice path analysis, and we have such a lattice path here.

  It can be shown that in the case of even $q$, the vector $e_{q+1}$ lies in
  the sum of the left eigenspaces to the eigenvalues $2\cos(\frac{j\pi}{q+1})$ for
  \emph{odd} $j \in \{1, \ldots , q\}$ only. Therefore, the other eigenvalues
  can be omitted and the functions $\Phi_{qj}$ are actually $1$-periodic.

  \proofparagraph{Asymptotics}
  We apply our Theorem~\ref{theorem:simple}.
  We have $\lambda_j=-\lambda_{q+1-j}$, so we combine our approach
  with Proposition~\ref{proposition:symmetric-eigenvalues}. Moreover,
  we have $\lambda_j>1$ iff $\frac{j}{q+1}<\frac{1}{3}$ iff
  $j\leq\ceil{\frac{q-2}{3}}$. This results
  in~\eqref{eq:esthetic:asy-main}.

  \proofparagraph{Fourier Coefficients}
  We can compute the Fourier coefficients according to
  Theorem~\ref{theorem:simple} and
  Proposition~\ref{proposition:symmetric-eigenvalues}.
\end{proof}
}%\moveestheticproof

%%% Local Variables:
%%% mode: latex
%%% TeX-master: "regular-sequences"
%%% End:

\section{Asymptotics of Summatory Functions}
\label{sec:asy-summatory}

\subsection{Main Result on the Asymptotics}\label{section:introduction:main-result}

We are interested in the asymptotic behaviour of the summatory function
$X(N)=\sum_{0\le n<N}x(n)$.

We choose any
vector norm $\norm{\,\cdot\,}$ on $\C^d$ and its induced matrix norm. We set $C\coloneqq
\sum_{r=0}^{q-1}A_r$. We choose $R>0$ such that $\norm{A_{r_1}\dotsm
  A_{r_\ell}}=\Oh{R^\ell}$ holds for all $\ell\ge 0$ and $0\le r_1, \dotsc,
r_{\ell}<q$. In other words, $R$ is an upper bound for the joint spectral
radius of $A_1$, \ldots, $A_{q-1}$.
The spectrum of $C$, i.e., the set of eigenvalues of $C$, is denoted by
$\sigma(C)$. For $\lambda\in\C$, let $m(\lambda)$ denote the size of the
largest Jordan block of $C$ associated with $\lambda$; in particular,
$m(\lambda)=0$ if $\lambda\notin\sigma(C)$.
Finally, we consider the Dirichlet series\footnote{
Note that the summatory function $X(N)$ contains the summand $x(0)$ but the Dirichlet series cannot.
This is because the choice of including $x(0)$ into $X(N)$ will lead to more consistent results.}
\begin{equation*}
  \calX(s) = \sum_{n\ge 1} n^{-s}x(n), \qquad
  \calV(s) = \sum_{n\ge 1} n^{-s}v(n)
\end{equation*}
where $v(n)$ is the vector valued sequence defined in \eqref{eq:linear-representation}.
Of course, $\calX(s)$ is the first component of $\calV(s)$.
The principal value of the complex logarithm is denoted by $\log$. The
fractional part of a real number $z$ is denoted by $\fractional{z}\coloneqq z-\floor{z}$.

\begin{theorem}\label{theorem:simple}
  With the notations above, we have
  \begin{multline}\label{eq:formula-X-n}
    X(N) = \sum_{\substack{\lambda\in\sigma(C)\\\abs{\lambda}>R}}N^{\log_q\lambda}
    \sum_{0\le k<m(\lambda)}(\log_q N)^k
    \Phi_{\lambda k}(\fractional{\log_q N}) \\
    + \Oh[\big]{N^{\log_q R}(\log N)^{\max\setm{m(\lambda)}{\abs{\lambda}=R}}}
  \end{multline}
  for suitable $1$-periodic continuous functions $\Phi_{\lambda k}$. If there
  are no eigenvalues $\lambda\in\sigma(C)$ with $\abs{\lambda}\le R$, the
  $O$-term can be omitted.

  For $\abs{\lambda}>R$ and $0\le k<m(\lambda)$, the function $\Phi_{\lambda k}$ is Hölder continuous with any exponent
  smaller than $\log_q(\abs{\lambda}/R)$.

  The Dirichlet series $\calV(s)$ converges absolutely and uniformly on compact
  subsets of the half plane $\Re
  s>\log_q R +1$ and can be continued to a meromorphic function on the half plane $\Re
  s>\log_q R$.
  It satisfies the functional equation
  \begin{equation}\label{eq:functional-equation-V}
    (I-q^{-s}C)\calV(s)= \sum_{n=1}^{q-1}n^{-s}v(n) +
    q^{-s}\sum_{r=0}^{q-1}A_r \sum_{k\ge
      1}\binom{-s}{k}\Bigl(\frac{r}{q}\Bigr)^k \calV(s+k)
  \end{equation}
  for $\Re s>\log_q R$. The right side converges absolutely and uniformly on
  compact subsets of $\Re s>\log_q R$. In particular, $\calV(s)$ can only have
  poles where $q^s\in\sigma(C)$.

  For $\lambda\in\sigma(C)$ with
  $\abs{\lambda}>R$, the Fourier series
  \begin{equation*}
    \Phi_{\lambda k}(u) = \sum_{\ell\in \Z}\varphi_{\lambda k\ell}\exp(2\ell\pi i u)
  \end{equation*}
  converges pointwise for $u\in\R$ where
  \begin{equation}\label{eq:Fourier-coefficient:simple}
    \varphi_{\lambda k\ell} = \frac{(\log q)^k}{k!}
    \Res[\bigg]{\frac{\bigl(x(0)+\calX(s)\bigr)
      \bigl(s-\log_q \lambda-\frac{2\ell\pi i}{\log q}\bigr)^k}{s}}%
    {s=\log_q \lambda+\frac{2\ell\pi i}{\log q}}
  \end{equation}
  for $\ell\in\Z$, $0\le k<m(\lambda)$.
\end{theorem}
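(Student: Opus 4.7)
The plan is a Mellin--Perron argument in three stages: first I will derive the functional equation \eqref{eq:functional-equation-V} and the meromorphic continuation of $\calV(s)$; second I will recover $X(N)$ from a contour integral of $\bigl(x(0)+\calX(s)\bigr)N^s/s$ via the pseudo-Tauberian theorem of Section~\ref{sec:pseudo-tauber}; third I will shift the line of integration down to $\Re s = \log_q R + \eps$ and identify the main terms as residues. For the first stage I split $\sum_{n \ge 1} n^{-s} v(n)$ according to $n \bmod q$, substitute $n = qm + r$ using $v(qm+r) = A_r v(m)$, and expand $(qm+r)^{-s} = q^{-s} m^{-s} \sum_{k \ge 0} \binom{-s}{k} (r/(qm))^k$ for $m \ge 1$; isolating the $k = 0$ term yields the operator $(I - q^{-s} C)$ on the left side and produces \eqref{eq:functional-equation-V}. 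The $k \ge 1$ tail converges absolutely and uniformly on compact subsets of $\Re s > \log_q R$ thanks to the bound $\norm{v(n)} = \Oh{n^{\log_q R + \eps}}$ coming from the joint-spectral-radius assumption, and meromorphy with the asserted pole locations and orders follows because $(I - q^{-s} C)^{-1}$ has poles exactly at $q^s \in \sigma(C)$ with orders bounded by the Jordan-block sizes $m(\lambda)$.

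For the second stage, a naive Perron formula for $X(N)$ does not converge absolutely. I will bypass this by starting from the absolutely convergent higher-order Mellin--Perron representation, whose left side is a Cesàro-smoothed partial sum of $x$, and invoking the pseudo-Tauberian theorem of Section~\ref{sec:pseudo-tauber} to pass back to $X(N)$ with an admissible error. This is precisely the step where the previous proof required a technical restriction on the eigenvalues; the new generating-functions proof of that theorem (the technical heart of this paper) removes the restriction, and is also where I expect the main obstacle to lie—everything else is routine complex analysis once it is available. Iterating \eqref{eq:functional-equation-V} a bounded number of times also provides the polynomial-in-$|\Im s|$ bounds on $\calV(s)$ along vertical lines in $\Re s > \log_q R$ needed to control the horizontal contour pieces.

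For the third stage, shifting the integration line down to $\Re s = \log_q R + \eps$ crosses poles exactly at $s_{\lambda\ell} = \log_q \lambda + 2\ell\pi i / \log q$ for $\lambda \in \sigma(C)$ with $|\lambda| > R$ and $\ell \in \Z$, of order $m(\lambda)$. Expanding
\[
  N^s = N^{\log_q \lambda} \fexp{2 \ell \pi i \fractional{\log_q N}} \sum_{k \ge 0} \frac{(\log q)^k (\log_q N)^k}{k!} (s - s_{\lambda\ell})^k
\]
inside the residue at $s_{\lambda\ell}$ and regrouping by the power of $\log_q N$ matches the Fourier-coefficient formula \eqref{eq:Fourier-coefficient:simple}, while summing over $\ell \in \Z$ assembles the Fourier series of each $\Phi_{\lambda k}$. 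The shifted contour together with the pseudo-Tauberian correction produces the stated error term, and the Hölder regularity with any exponent $< \log_q(|\lambda|/R)$ follows from the decay $\varphi_{\lambda k\ell} = \Oh{|\ell|^{-\alpha}}$ for every such $\alpha$, itself a consequence of the polynomial bound on $\calV$ along the shifted line combined with a standard Bernstein-type smoothness criterion.
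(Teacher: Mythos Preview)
Your proposal is correct and follows essentially the same approach as the paper: the paper's own proof is a one-line sketch referring to the argument in \cite{Heuberger-Krenn-Prodinger:2018:pascal-rhombus} with the pseudo-Tauberian step replaced by the new Theorem~\ref{theorem:pseudo-Tauber}, and your three stages (functional equation and meromorphic continuation, higher-order Mellin--Perron plus pseudo-Tauberian descent, contour shift and residue identification) are exactly that argument spelled out. You have also correctly located the only nontrivial new ingredient---the unrestricted pseudo-Tauberian theorem---as the place where the previous technical condition on the eigenvalues is lifted.
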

The above theorem is almost the formulation found
in~\cite{Heuberger-Krenn-Prodinger:2018:pascal-rhombus}, but with the
important difference that the technical condition
$\abs{\lambda}>\max\set{R, 1/q}$ is replaced with the condition
$\abs{\lambda}>R$. The latter condition is inherent in the problem:
single summands $x(n)$ might be as large as $n^{\log_q R}$ and must
therefore be absorbed by the error term in any smooth asymptotic
formula for the summatory function.

\begin{proof}[Sketch of Proof of Theorem~\ref{theorem:simple}]
  Use the proof of the corresponding theorem
  in~\cite{Heuberger-Krenn-Prodinger:2018:pascal-rhombus}, but replace
  the pseudo-Tauberian argument by
  Theorem~\ref{theorem:pseudo-Tauber}.
\end{proof}

\subsection{Fourier Coefficients \& Mellin--Perron Summation}\label{sec:heuristic}

We give a heuristic and non-rigorous argument explaining why the formula
\eqref{eq:Fourier-coefficient:simple} for the Fourier coefficients is
expected; see also \cite{Drmota-Grabner:2010}.

By the Mellin--Perron summation formula of order $0$ (see, for example,
\cite[Theorem~2.1]{Flajolet-Grabner-Kirschenhofer-Prodinger:1994:mellin}),
we have
\begin{equation*}
  \sum_{1\le n<N}x(n) + \frac{x(N)}{2} = \frac1{2\pi i}\int_{\max\set{\log_q R + 2,1}
    -i\infty}^{\max\set{\log_q R + 2,1} +i\infty} \calX(s)\frac{N^s\,\dd s}{s}.
\end{equation*}
Shifting the line of integration to the left and collecting the
residues at the location of the poles of $\calX(s)$ claimed in
Theorem~\ref{theorem:simple} yields the Fourier series expansion.
However, we have \emph{no analytic justification} that this is
allowed, so we need to work around this issue by reducing the problem
to higher order Mellin--Perron summation; details are to be found
in~\cite{Heuberger-Krenn-Prodinger:2018:pascal-rhombus}. One key
ingredient to make tracks back to our original summation problem is a
pseudo-Tauberian theorem; see below.

\def\movesymmetric{
\section{Fluctuations of Symmetrically Arranged Eigenvalues}
\label{sec:symmetric}

In our main results, the occurring fluctuations are always
$1$-periodic functions. However, if eigenvalues of the sum of matrices
of the linear representation are
arranged in a symmetric way, then we can combine summands and get
fluctuations with longer periods. This is in particular true if all
vertices of a regular polygon (with center~$0$) are eigenvalues.

\begin{proposition}\label{proposition:symmetric-eigenvalues}
  Let $\lambda\in\C$ and $k\in\N_0$. For a $p\in\N_0$ denote by $U_p$ the
  set of $p$th roots of unity. Suppose for each $\zeta\in U_p$
  we have a continuous $1$-periodic function
  \begin{equation*}
    \Phi_{(\zeta\lambda)k}(u)
    = \sum_{\ell\in\Z}\varphi_{(\zeta\lambda)k\ell}\exp(2\ell\pi i u)
  \end{equation*}
  whose Fourier coefficients are
  \begin{equation*}
    \varphi_{(\zeta \lambda)k\ell}
    =\Res[\bigg]{\calD(s)
      \Bigl(s - \log_q (\zeta\lambda) - \frac{2\ell\pi i}{\log q}\Bigr)^k}%
    {s=\log_q (\zeta\lambda) + \frac{2\ell\pi i}{\log q}}
  \end{equation*}
  for a suitable function $\calD(s)$.

  Then
  \begin{equation*}
    \sum_{\zeta\in U_p} N^{\log_q (\zeta\lambda)} (\log_q N)^k
    \Phi_{(\zeta\lambda)k}(\fractional{\log_q N})
    = N^{\log_q \lambda} (\log_q N)^k \Phi(p\fractional{\log_{q^p} N})
  \end{equation*}
  with a continuous $p$-periodic function
  \begin{equation*}
    \Phi(u)
    = \sum_{\ell\in\Z}\varphi_{\ell}\fexp[\Big]{\frac{2\ell\pi i}{p} u}
  \end{equation*}
  whose Fourier coefficients are
  \begin{equation*}
    \varphi_{\ell}
    =\Res[\bigg]{\calD(s)
      \Bigl(s - \log_q \lambda - \frac{2\ell\pi i}{p\log q}\Bigr)^k}%
    {s=\log_q \lambda + \frac{2\ell\pi i}{p\log q}}.
  \end{equation*}
\end{proposition}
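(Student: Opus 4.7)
The plan is to reduce the statement to a direct computation by reindexing a double Fourier sum as a single one. Throughout, I would write each $p$-th root of unity as $\zeta_m = \fexp{2\pi i m/p}$ for $m \in \{0,1,\dots,p-1\}$ and fix the branch of the logarithm by $\log \zeta_m = 2\pi i m/p$, so that
\begin{equation*}
  \log_q(\zeta_m \lambda) = \log_q \lambda + \frac{2\pi i m}{p \log q}.
\end{equation*}
With this choice, the residue location appearing in the Fourier coefficient $\varphi_{(\zeta_m\lambda)k\ell}$ is
\begin{equation*}
  \log_q(\zeta_m\lambda) + \frac{2\ell\pi i}{\log q}
  = \log_q \lambda + \frac{2\pi i (m + p\ell)}{p \log q}.
\end{equation*}

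Next, I would observe that the assignment $(m,\ell)\mapsto n := m + p\ell$ is a bijection from $\{0,\dots,p-1\}\times\Z$ to $\Z$, and that under this bijection the Fourier coefficient $\varphi_{(\zeta_m\lambda)k\ell}$ equals exactly the $\varphi_n$ defined in the statement. So the double sum
\begin{equation*}
  \sum_{\zeta\in U_p} \sum_{\ell\in\Z} \varphi_{(\zeta\lambda)k\ell} \, (\cdots)
\end{equation*}
becomes a single sum $\sum_{n\in\Z}\varphi_n(\cdots)$, provided I can identify the exponential factor consistently.

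For that exponential factor, the key simplifications are:
\begin{equation*}
  \fexp{2\ell\pi i \fractional{\log_q N}} = \fexp{2\ell\pi i \log_q N},
\end{equation*}
since $\log_q N - \fractional{\log_q N}\in\Z$, and
\begin{equation*}
  N^{\log_q(\zeta_m\lambda)}\fexp{2\ell\pi i\log_q N}
  = N^{\log_q \lambda} \fexp[\Big]{\frac{2\pi i(m+p\ell)}{p}\log_q N}
  = N^{\log_q \lambda} \fexp[\Big]{\frac{2\pi i n}{p}\log_q N}.
\end{equation*}
Finally, using $\log_q N = p\log_{q^p} N$ and peeling off the integer part,
\begin{equation*}
  \fexp[\Big]{\frac{2\pi i n}{p}\log_q N}
  = \fexp[\Big]{\frac{2\pi i n}{p}\cdot p\fractional{\log_{q^p} N}},
\end{equation*}
because the discarded piece is $\fexp{2\pi i n \floor{\log_{q^p} N}}=1$. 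Collecting all of this gives exactly $N^{\log_q \lambda}(\log_q N)^k \Phi(p\fractional{\log_{q^p} N})$ with the claimed $\Phi$.

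The $p$-periodicity of $\Phi$ is immediate from its Fourier expansion. For continuity, I would argue that each $\Phi_{(\zeta\lambda)k}$ being continuous and $1$-periodic makes $u\mapsto \Phi_{(\zeta\lambda)k}(\fractional{u})$ continuous in $u\in\R$, hence the left-hand side is continuous in $\log_q N$; the identity then forces $\Phi$ to be continuous on $[0,p)$ and, by $p$-periodicity, everywhere. The only subtlety in the whole argument is keeping branches and indexing straight so that the bijection $(m,\ell)\leftrightarrow n$ is truly consistent with the given residue formulas; once that bookkeeping is done, the rest is routine manipulation of exponentials.
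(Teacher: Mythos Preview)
Your approach is essentially the paper's: both proofs parametrise $U_p$ by consecutive integers, use the additivity $\log_q(\zeta_j\lambda)=\log_q\lambda+\frac{2j\pi i}{p\log q}$, and then merge the double sum over $(\ell,j)$ into a single sum over $n=j+p\ell$ to obtain the claimed Fourier expansion of~$\Phi$.

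The one place where the paper is more careful than your proposal is the branch of the logarithm. In this paper $\log$ is the \emph{principal} value, so $\log_q(\zeta\lambda)$ in the statement is not something you may redefine. Your assertion that $\log_q(\zeta_m\lambda)=\log_q\lambda+\frac{2\pi i m}{p\log q}$ for $m\in\{0,\dots,p-1\}$ fails in general under that convention (take $\arg\lambda$ close to~$\pi$ and $m>0$). The paper fixes this by choosing an offset $j_0=\floor{-\frac{p(\pi+\arg\lambda)}{2\pi}}+1$ and letting $j$ range over $\{j_0,\dots,j_0+p-1\}$, which guarantees $-\pi<\arg\lambda+\frac{2j\pi}{p}\le\pi$ and hence that the principal log really is additive on this range. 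Your final sentence shows you are aware of the issue, but ``fix the branch by $\log\zeta_m=2\pi i m/p$'' does not address it: a choice of $\log\zeta_m$ does not force $\log(\zeta_m\lambda)=\log\zeta_m+\log\lambda$. The repair is easy---either shift your index window as the paper does, or observe that each summand $N^{\log_q(\zeta\lambda)}\Phi_{(\zeta\lambda)k}(\fractional{\log_q N})$ is invariant under replacing $\log_q(\zeta\lambda)$ by any value differing from the principal one by an integer multiple of $\frac{2\pi i}{\log q}$, since the resulting shift in the Fourier index of $\Phi_{(\zeta\lambda)k}$ exactly cancels the extra factor in~$N^{\log_q(\zeta\lambda)}$.
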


Note that we again write $\Phi(p\fractional{\log_{q^p} N})$ to
optically emphasise the $p$-periodicity. Moreover, the factor
$(\log_q N)^k$ in the result could be cancelled, however it is there to
optically highlight the similarities to the main results (e.g.\@
Theorem~\ref{theorem:simple}).

In the case of a $q$-regular sequence which we analyse in this paper,
a different point of view is possible: The
sequence is $q^p$-regular as well
(by~\cite[Theorem~2.9]{Allouche-Shallit:1992:regular-sequences}) and
therefore, all eigenvalues $\zeta\lambda$ of the original sequence
become eigenvalues $\lambda^p$ whose algebraic multiplicity is the sum
of the individual multiplicities but the sizes of the corresponding
Jordan blocks do not change.
Moreover, the joint spectral radius is also taken to the $p$th power.
We apply, for example,
Theorem~\ref{theorem:simple} in our $q^p$-world and get again
$1$-period fluctuations.
Note that for actually computing the Fourier coefficients,
the approach presented in the proposition seems to be more suitable.

The above proposition will be used for
the analysis of esthetic numbers in
Section~\ref{sec:esthetic-numbers}.
The proof of Proposition~\ref{proposition:symmetric-eigenvalues}
can be found in
Appendix~\ref{sec:proof-symmetric-eigenvalues}.

}%\movesymmetric

\section{Pseudo-Tauberian Theorem}
\label{sec:pseudo-tauber}

In this section, we generalise a pseudo-Tau\-be\-rian argument by Flajolet, Grabner,
Kirschenhofer, Prodinger and
Tichy~\cite[Proposition~6.4]{Flajolet-Grabner-Kirschenhofer-Prodinger:1994:mellin}. In
contrast to their version, we allow for an additional logarithmic factor,
have weaker growth conditions on the Dirichlet series and
quantify the error. We also extend the result to all complex $\kappa$.

\begin{theorem}\label{theorem:pseudo-Tauber}
  Let $\kappa\in\C$ and $q>1$ be a real number,  $m$ be a
  positive integer, $\Phi_0$, \ldots, $\Phi_{m-1}$ be $1$-periodic Hölder continuous
  functions with exponent $\alpha>0$, and $0<\beta<\alpha$. Then there exist continuously differentiable functions
  $\Psi_{-1}$, $\Psi_{0}$, \ldots, $\Psi_{m-1}$, periodic with period $1$, and a constant $c$ such that
  \begin{multline}
    \sum_{1\le n< N}n^\kappa \sum_{\substack{j+k=m-1\\0\le j<m}}\frac{(\log n)^{k}}{k!}\Phi_j(\log_q n)\\
    =c + N^{\kappa+1}\sum_{\substack{k+j=m-1\\-1\le j<m}} \frac{(\log N)^{k}}{k!}\Psi_j(\log_q N)
    + \Oh[\big]{N^{\Re \kappa+1-\beta}}
    \label{eq:pseudo-Tauber-relation}
  \end{multline}
  for integers $N\to\infty$.

  Denote the Fourier coefficients of $\Phi_j$ and $\Psi_j$ by $\varphi_{j\ell}\coloneqq
  \int_0^1\Phi_j(u)\exp(-2\ell\pi i u)\, \dd u$ and $\psi_{j\ell}\coloneqq
  \int_0^1\Psi_j(u)\exp(-2\ell\pi i u)\, \dd u$, respectively.
  Then the corresponding generating functions fulfil
  \begin{equation}\label{eq:pseudo-Tauber-Fourier}
    \sum_{0\le j<m}\varphi_{j\ell}Z^j = \Bigl(\kappa+1+\frac{2\ell \pi i}{\log q} + Z\Bigr)\sum_{-1\le j<m}\psi_{j\ell}Z^j
    +\Oh{Z^m}
  \end{equation}
  for $\ell\in \Z$ and $Z\to 0$.

  If $q^{\kappa+1}\neq 1$, then $\Psi_{-1}$ vanishes.
\end{theorem}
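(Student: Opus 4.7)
The plan is to package the $m$ functions $\Phi_j$ into a single formal series in an auxiliary variable~$Z$: set $\hat\Phi(Z,u) = \sum_{0\le j<m}\Phi_j(u)Z^j$ and similarly $\hat\Psi(Z,u) = \sum_{-1\le j<m}\Psi_j(u)Z^j$. The key identity $\sum_{j+k=m-1}(\log n)^k\Phi_j(\log_q n)/k! = [Z^{m-1}]\,n^Z\,\hat\Phi(Z,\log_q n)$ turns~\eqref{eq:pseudo-Tauber-relation} into the $Z^{m-1}$-coefficient of the one-parameter identity
\begin{equation*}
  \sum_{1\le n<N}n^{\kappa+Z}\hat\Phi(Z,\log_q n) = c(Z) + N^{\kappa+1+Z}\hat\Psi(Z,\log_q N) + \Oh{N^{\Re\kappa+1-\beta}},
\end{equation*}
which I would establish uniformly for $Z$ in a small complex neighbourhood of~$0$. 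This reduces the statement with logarithmic weights to a Z-parametrised, logarithm-free version of the classical pseudo-Tauberian argument of Flajolet, Grabner, Kirschenhofer, Prodinger and Tichy, applied with the shifted exponent $\kappa+Z$.

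Next I would reduce to individual Fourier modes. Writing $\hat\Phi(Z,u) = \sum_{\ell\in\Z}\hat\varphi_\ell(Z)\exp(2\pi i\ell u)$ with $\hat\varphi_\ell(Z) = \sum_j\varphi_{j\ell}Z^j$ and substituting $u=\log_q n$, each mode contributes $\sum_{1\le n<N}n^{\kappa+Z+2\pi i\ell/\log q}$. Partial summation (or integration against the Hurwitz zeta function) gives
\begin{equation*}
  \sum_{1\le n<N}n^s = \frac{N^{s+1}}{s+1}+c_s+\Oh{N^{\Re s}}
\end{equation*}
with a distinct logarithmic correction exactly when $s+1=0$. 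Matching the $N^{\kappa+1+Z+2\pi i\ell/\log q}$-term against the Ansatz $N^{\kappa+1+Z}\exp(2\pi i\ell\log_q N)\hat\psi_\ell(Z)$ forces the relation $\hat\varphi_\ell(Z) = (\kappa+1+2\pi i\ell/\log q + Z)\hat\psi_\ell(Z)+\Oh{Z^m}$, which is exactly~\eqref{eq:pseudo-Tauber-Fourier}. When $\kappa+1+2\pi i\ell/\log q\neq0$, formal power series division determines $\hat\psi_\ell(Z)$ uniquely with $\psi_{-1,\ell}=0$; otherwise (which requires $q^{\kappa+1}=1$) the resonance creates a $Z^{-1}$-contribution, accounting precisely for the extra function~$\Psi_{-1}$.

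The main obstacle is making all of this rigorous uniformly in~$\ell$ while simultaneously delivering the error bound and the asserted continuous differentiability of each~$\Psi_j$. I would split the Fourier series at a threshold $\abs{\ell}\le L\approx N^\gamma$: for low modes $\abs{\ell}\le L$ I sum the pointwise remainders coming from partial summation, and for high modes $\abs{\ell}>L$ I use the Hölder estimate $\abs{\varphi_{j\ell}}=\Oh{\abs{\ell}^{-\alpha}}$ to bound the original sum and the target expression directly. Choosing $\gamma$ so that both $\gamma\alpha$ and $1-\gamma$ exceed~$\beta$, which is possible because $0<\beta<\alpha$, yields the global error $\Oh{N^{\Re\kappa+1-\beta}}$. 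The same Hölder decay together with the division in~\eqref{eq:pseudo-Tauber-Fourier} produces $\abs{\psi_{j\ell}}=\Oh{\abs{\ell}^{-1-\alpha}}$, whence the Fourier series for each $\Psi_j$ converges absolutely together with its termwise derivative and defines a $C^1$-function. The constant~$c$ collects the $n=1$ boundary term and the $N$-independent residues~$c_s$, and the $[Z^{m-1}]$-extraction recovers~\eqref{eq:pseudo-Tauber-relation}. Finally, when $q^{\kappa+1}\neq 1$ no value of $\kappa+1+2\pi i\ell/\log q$ vanishes, so the non-resonance argument above forces $\psi_{-1,\ell}=0$ for every~$\ell$ and hence $\Psi_{-1}\equiv 0$.
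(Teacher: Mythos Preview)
Your generating-function packaging via $\hat\Phi(Z,u)=\sum_j\Phi_j(u)Z^j$ and the identification of the left side of~\eqref{eq:pseudo-Tauber-relation} as $[Z^{m-1}]\sum_{n<N}n^{\kappa+Z}\hat\Phi(Z,\log_q n)$ is exactly the device the paper uses. From that point on, however, the two arguments diverge, and your route has two concrete gaps.

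\emph{The error budget does not close for the full range $0<\beta<\alpha$.} Your two requirements $\gamma\alpha>\beta$ and $1-\gamma>\beta$ force $\beta/\alpha<\gamma<1-\beta$, which is satisfiable only when $\beta<\alpha/(1+\alpha)$, not whenever $\beta<\alpha$; for instance $\alpha=1$, $\beta=0.6$ admits no $\gamma$. Matters are worse once one tracks the $\ell$-dependence of the partial-summation remainder in $\sum_{n<N}n^{s}$: that remainder carries a factor comparable to $\abs{s}\asymp\abs{\ell}$, so the low-mode error is $\sum_{\abs\ell\le L}\abs{\hat\varphi_\ell}(1+\abs\ell)N^{\Re\kappa}=O(L^{2-\alpha}N^{\Re\kappa})$, which tightens the admissible range further to roughly $\beta<\alpha/2$.

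\emph{The $C^1$ claim does not follow from Fourier decay.} From $\abs{\psi_{j\ell}}=O(\abs\ell^{-1-\alpha})$ the termwise derivative has coefficients of size $O(\abs\ell^{-\alpha})$, and $\sum_\ell\abs\ell^{-\alpha}$ diverges for every H\"older exponent $0<\alpha\le 1$. So absolute convergence of the differentiated series fails, and you have not established that the $\Psi_j$ are continuously differentiable.

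The paper avoids both problems by \emph{not} expanding into Fourier modes for the analytic part. It splits the sum into blocks $q^p\le n<q^{p+1}$, rescales $n=q^p x$, and recognises each block as a Riemann sum for $I(u,Z)=\int_1^{q^u}x^{\kappa+Z}\hat\Phi(Z,\log_q x)\,\dd x$; H\"older continuity bounds the Riemann-sum error by $O(q^{-\alpha p})$ directly, uniformly in $Z$ and with no $\ell$-dependent constants, so the geometric summation over $p$ delivers the full $O(N^{\Re\kappa+1-\beta})$. The function $\hat\Psi(u,Z)$ is then written down explicitly as $Q(Z)^{-u}\bigl(I(u,Z)-I(1,Z)/(1-Q(Z))\bigr)$ with $Q(Z)=q^{\kappa+1+Z}$, and its $1$-periodicity and $C^1$-regularity in $u$ are checked by direct differentiation (the boundary values at $u=0,1$ match by a short algebraic identity). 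Only \emph{after} $\hat\Psi$ is constructed does the paper compute its Fourier coefficients, obtaining~\eqref{eq:pseudo-Tauber-Fourier} by one integration; the possible simple pole at $Z=0$ when $q^{\kappa+1}=1$ is exactly what produces a nonzero $\Psi_{-1}$.
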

\begin{remark}
  Note that the constant $c$ is absorbed by the error term if
  $\Re\kappa+1>\alpha$, in particular if $\Re\kappa>0$.
  Therefore, this constant does not occur in the
  article~\cite{Flajolet-Grabner-Kirschenhofer-Prodinger:1994:mellin}.
\end{remark}
\begin{remark}
  \label{remark:recurrence-fluctuation}
  The factor $\kappa+1+\frac{2\ell \pi i}{\log q} + Z$ in
  \eqref{eq:pseudo-Tauber-Fourier} will turn out to
  correspond exactly to the additional factor $s+1$ in the first order
  Mellin--Perron summation formula with the substitution
  $s=\kappa+\frac{2\ell\pi i}{\log q}+ Z$ such that the local expansion around
  the pole in $s=\kappa+\frac{2\ell\pi i}{\log q}$ of the Dirichlet generating
  function is conveniently written as a Laurent series in $Z$.
\end{remark}

\begin{proof}
  \proofparagraph{Notations}
  Without loss of generality, we assume that $q^{\Re \kappa+1}\neq q^{\alpha}$:
  otherwise, we slightly decrease $\alpha$ keeping the inequality
  $\beta<\alpha$ intact.
  We use the abbreviations $\Lambda\coloneqq \floor{\log_q N}$,
  $\nu\coloneqq \fractional{\log_q N}$, i.e.,
  $N=q^{\Lambda+\nu}$. We use the generating functions
  \begin{align*}
    \f{\Phi}{u, Z}&\coloneqq \sum_{0\le j<m}\Phi_j(u)Z^j,\\
    L(N, Z)&\coloneqq \sum_{1\le n<N}n^{\kappa+Z} \f{\Phi}{\log_q n, Z}=\sum_{1\le
             n<N}n^\kappa \fexp[\big]{(\log n) Z}\f{\Phi}{\log_q n, Z},\\
    Q(Z)&\coloneqq q^{\kappa+1+Z}
  \end{align*}
  for $0\le u\le 1$ and $0<\abs{Z}<2r$ where $r>0$ is chosen such that
  $r<(\alpha-\beta)/2$ and such that
  $Q(Z)\neq 1$ and $\abs{Q(Z)}\neq q^{\alpha}$ for these $Z$.
  (The condition $Z\neq 0$ is only needed for the case $q^{1+\kappa}=1$.)
  We will stick to the above choice of~$r$ and restrictions for~$Z$ throughout
  the proof.

  It is easily seen that  the left-hand side
  of~\eqref{eq:pseudo-Tauber-relation} equals $[Z^{m-1}]L(N, Z)$, where
  $[Z^{m-1}]$ denotes extraction of the coefficient of $Z^{m-1}$.

  \proofparagraph{Approximation of the Sum by an Integral}
  Splitting the range of summation with respect to powers of $q$ yields
  \begin{align*}
    L(N, Z) = \phantom{+\;}&
    \sum_{0\le p<\Lambda}\sum_{q^p\le n<q^{p+1}}n^{\kappa+Z}
    \f{\Phi}{\log_q n, Z} \\
    +\; &
    \sum_{q^\Lambda\le n< q^{\Lambda+\nu}}n^{\kappa+Z}\f{\Phi}{\log_q n, Z}.
  \end{align*}
  We write $n=q^px$ (or $n=q^\Lambda x$ for the second sum), use the
  periodicity of $\Phi$ in $u$ and get
  \begin{align*}
    L(N, Z) = \phantom{+\;}&
    \sum_{0\le p<\Lambda}Q(Z)^p\sum_{\substack{x\in q^{-p}\Z\\ 1\le x < q}}
     x^{\kappa+Z}\f{\Phi}{\log_q x, Z}\frac{1}{q^p} \\
    +\; &
    Q(Z)^\Lambda \sum_{\substack{x\in q^{-\Lambda}\Z\\ 1\le x < q^{\nu}}}
    x^{\kappa+Z}\f{\Phi}{\log_q x, Z}\frac{1}{q^\Lambda}.
  \end{align*}
  The inner sums are Riemann sums converging
  to the corresponding integrals for $p\to\infty$.
  We set
  \begin{equation*}
    I(u, Z)\coloneqq\int_{1}^{q^u}x^{\kappa+Z} \f{\Phi}{\log_q x, Z}\,\dd x.
  \end{equation*}
  It will be convenient to change variables $x=q^w$ in $I(u, Z)$ to get
  \begin{equation}\label{eq:Pseudo-Tauber:I-definition}
    I(u, Z)=(\log q)\int_{0}^{u}Q(Z)^w \f{\Phi}{w, Z}\,\dd w.
  \end{equation}
  We define the error~$\varepsilon_p(u, Z)$ by
  \begin{equation*}
    \sum_{\substack{x\in q^{-p}\Z\\
        1\le x < q^u}}x^{\kappa+Z} \f{\Phi}{\log_q x, Z}\frac1{q^p}=I(u, Z) +
    \varepsilon_{p}(u, Z).
  \end{equation*}
  As the sum and the integral are both analytic in $Z$, their difference
  $\varepsilon_p(u, Z)$ is analytic in $Z$, too.
  We bound~$\varepsilon_{p}(u, Z)$ by the difference of upper and lower
  Darboux sums (step size~$q^{-p}$)
  corresponding to the integral~$I(u, Z)$:
  On each interval of length $q^{-p}$, the maximum and minimum of a
  Hölder continuous function can differ by at most $\Oh{q^{-\alpha p}}$. As
  the integration interval as well as the range for $u$ and $Z$ are finite, this translates to the bound
  $\varepsilon_p(u, Z)=\Oh{q^{-\alpha p}}$ as $p\to\infty$
  uniformly in $0\le u\le 1$ and $\abs{Z}<2r$. This results in
    \begin{multline*}
    L(N, Z)=
    I(1, Z)\sum_{0\le p<\Lambda}Q(Z)^p
    + \sum_{0\le p<\Lambda}Q(Z)^p \varepsilon_{p}(1, Z)
    + I(\nu, Z)\,Q(Z)^{\Lambda} + Q(Z)^\Lambda  \varepsilon_{\Lambda}(\nu, Z).
  \end{multline*}
  If $\abs{Q(Z)}/q^\alpha=q^{\Re\kappa+1 + \Re Z -\alpha}<1$, i.e.,
  $\Re \kappa+\Re Z<\alpha-1$,
  the second sum involving the integration error
  converges absolutely and uniformly in $Z$
  for $\Lambda\to\infty$ to some analytic function
  $c'(Z)$; therefore, we can
  replace the second sum by
  $c'(Z)+\Oh[\big]{q^{(\Re \kappa+1+2r-\alpha)\Lambda}}=c'(Z)+\Oh[\big]{N^{\Re\kappa+1+2r-\alpha}}$
  in this case.
  If $\Re \kappa + \Re Z>\alpha-1$, then the second sum is
  $\Oh[\big]{q^{(\Re \kappa+2r+1-\alpha)\Lambda}}=\Oh[\big]{N^{\Re\kappa+1+2r-\alpha}}$.
  By our choice of $r$, the case $\Re \kappa+\Re Z=\alpha-1$ cannot occur.
  So in any case, we may write the second sum as
  $c'(Z)+\Oh[\big]{N^{\Re \kappa+1-\beta}}$ by our choice of $r$.
  The last summand involving $\varepsilon_{\Lambda}(\nu, Z)$ is absorbed by
  the error term of the second summand.
  Note that the error term is uniform in $Z$ and, by its construction,
  analytic in~$Z$.

  Thus we end up with
  \begin{equation}\label{eq:Pseudo-Tauber:L-decomposition}
    L(N, Z)= c'(Z) + S(N, Z) + \Oh[\big]{N^{\Re \kappa+1-\beta}}
  \end{equation}
  where
  \begin{equation}\label{eq:pseudo-Tauber-S-definition}
    S(N, Z)\coloneqq I(1, Z)\sum_{0\le
      p<\Lambda}Q(Z)^p+I(\nu, Z)Q(Z)^\Lambda.
  \end{equation}

  It remains to rewrite $S(N, Z)$ in the form required by
  \eqref{eq:pseudo-Tauber-relation}. We emphasise that we will compute $S(N, Z)$
  exactly, i.e., no more asymptotics for $N\to\infty$ will play any rôle.

  \proofparagraph{Construction of $\Psi$}
  We rewrite~\eqref{eq:pseudo-Tauber-S-definition} as
  \begin{align*}
    S(N, Z)&=
             I(1, Z)\frac{1-\f{Q}{Z}^\Lambda}{1-\f{Q}{Z}}
             + I(\nu, Z) \f{Q}{Z}^\Lambda.
  \end{align*}
  We replace
  $\Lambda$ by $\log_q N - \nu$ and use
  \begin{align*}
    \f{Q}{Z}^\Lambda
    &= \f{Q}{Z}^{\log_q N}\f{Q}{Z}^{-\nu}
    = N^{\kappa+1+Z}  \f{Q}{Z}^{-\nu}
  \end{align*}
  to get
  \begin{equation}\label{eq:Pseudo-Tauber:S-decomposition}
    S(N, Z)= \frac{I(1, Z)}{1-\f{Q}{Z}}
    + N^{\kappa+1+Z} \Psi(\nu, Z)
  \end{equation}
  with
  \begin{equation}\label{eq:Pseudo-Tauber:Psi-definition}
    \Psi(u, Z)\coloneqq  \f{Q}{Z}^{-u}
    \Bigl(I(u, Z)-\frac{I(1, Z)}{1-\f{Q}{Z}}\Bigr).
  \end{equation}

  \proofparagraph{Periodic Extension of $\Psi$}
  It is obvious that $\f{\Psi}{u, Z}$ is continuously differentiable in $u\in[0,
  1]$.
  We have
  \begin{equation*}
    \f{\Psi}{1, Z}=\frac{I(1, Z)}{\f{Q}{Z}}
    \Bigl(1-\frac{1}{1-\f{Q}{Z}}\Bigr)
    =-\frac{I(1, Z)}{1-\f{Q}{Z}}=\f{\Psi}{0, Z}
  \end{equation*}
  because $I(0, Z)=0$ by \eqref{eq:Pseudo-Tauber:I-definition}.
  The derivative of $\f{\Psi}{u, Z}$ with respect to $u$ is
  \begin{align*}
    \frac{\partial \f{\Psi}{u,Z}}{\partial u}
    &= -\bigl(\log\f{Q}{Z}\bigr) \f{\Psi}{u, Z}
    + (\log q) \f{Q}{Z}^{-u} \f{Q}{Z}^u \f{\Phi}{u, Z}\\
    &= -\bigl(\log\f{Q}{Z}\bigr) \f{\Psi}{u, Z} + (\log q) \f{\Phi}{u, Z},
  \end{align*}
  which implies that
  \begin{equation*}
    \frac{\partial \f{\Psi}{u,Z}}{\partial u}\Bigr|_{u=1}=\frac{\partial \f{\Psi}{u,Z}}{\partial u}\Bigr|_{u=0}.
  \end{equation*}
  We can therefore extend $\f{\Psi}{u, Z}$ to a $1$-periodic continuously
  differentiable function in $u$ on $\R$.

  \proofparagraph{Fourier Coefficients of $\Psi$}
  By using equations~\eqref{eq:Pseudo-Tauber:Psi-definition} and
  \eqref{eq:Pseudo-Tauber:I-definition}, $Q(Z)=q^{\kappa+1+Z}$, and
  $\exp(-2\ell\pi iu)=q^{-\chi_\ell u}$ with $\chi_\ell=\frac{2\pi i\ell}{\log q}$, we now express the Fourier coefficients of $\f{\Psi}{u, Z}$ in terms of those of
  $\f{\Phi}{u, Z}$ by
  \begin{multline*}
    \int_{0}^1 \f{\Psi}{u, Z} \exp(-2\ell\pi i u) \,\dd u\\
    \begin{aligned}
    &=
    (\log q)\int_{0\le w\le u\le 1}
      \f{Q}{Z}^{w-u} \f{\Phi}{w, Z} q^{-\chi_\ell u} \,\dd w\,\dd u \\
    &\phantom{=}\;
      -\frac{I(1, Z)}{1-\f{Q}{Z}} \int_0^1
        q^{-(\kappa+1+Z+\chi_\ell)u} \,\dd u\\
    &=
    (\log q)\int_{0\le w\le 1} \f{Q}{Z}^w \f{\Phi}{w, Z}
      \int_{w\le u\le 1} q^{-(\kappa+1+Z+\chi_\ell)u} \,\dd u\,\dd w \\
    &\phantom{=}\;
      -\frac{I(1, Z)}{(1-\f{Q}{Z})(\log q)(\kappa+1+Z+\chi_\ell)}
        \Bigl(1-\frac{1}{\f{Q}{Z}}\Bigr)\\
    &=
    \frac{1}{\kappa+1+Z+\chi_\ell}
      \int_0^1 \f{Q}{Z}^w \f{\Phi}{w, Z}
      \Bigl(q^{-(\kappa+1+Z+\chi_\ell)w}-\frac1{\f{Q}{Z}}\Bigr)
        \,\dd w \\
    &\phantom{=}\;
      + \frac{I(1, Z)}{\f{Q}{Z}(\log q)(\kappa+1+Z+\chi_\ell)}\\
    &=
    \frac{1}{\kappa+1+\chi_\ell+Z}
      \int_0^1 \f{\Phi}{w, Z} \fexp{-2\ell\pi i w} \,\dd w\\
    &\phantom{=}\;
      -\frac{1}{\f{Q}{Z} (\kappa+1+\chi_\ell+Z)}
        \int_0^1 \f{Q}{Z}^w \f{\Phi}{w, Z} \,\dd w\\
    &\phantom{=}\;
      + \frac{I(1, Z)}{\f{Q}{Z}(\log q)(\kappa+1+Z+\chi_\ell)}.
    \end{aligned}
  \end{multline*}
  The second and third summands cancel, and we get
  \begin{equation}\label{eq:Pseudo-Tauber:Fourier-Coefficients-GF}
    \Bigl(\kappa+1+\chi_\ell + Z\Bigr)
    \int_{0}^1 \f{\Psi}{u, Z}\exp(-2\ell\pi i u)\,\dd u =
    \int_0^1\f{\Phi}{w, Z}
    \exp(-2\ell\pi i w)\,\dd w.
  \end{equation}

  \proofparagraph{Extracting Coefficients}
  By~\eqref{eq:Pseudo-Tauber:Psi-definition}, $\f{\Psi}{u, Z}$ is analytic in $Z$
  for $0<\abs{Z}<2r$. If $q^{\kappa+1}\neq 1$, then it is analytic in $Z=0$, too. If
  $q^{\kappa+1}=1$, then~\eqref{eq:Pseudo-Tauber:Psi-definition} implies that $\f{\Psi}{u, Z}$
  might have a simple pole in $Z=0$.
  Note that all other possible poles have been excluded by our choice of $r$.
  For $j\ge -1$, we write
  \begin{equation*}
    \Psi_j(u)\coloneqq [Z^j]\f{\Psi}{u, Z}
  \end{equation*}
  and use Cauchy's formula to obtain
  \begin{equation*}
    \Psi_j(u) = \frac1{2\pi i}\oint_{\abs{Z}=r}\frac{\f{\Psi}{u, Z}}{Z^{j+1}}\,\dd Z.
  \end{equation*}
  This and the properties of $\f{\Psi}{u, Z}$ established above
  imply that $\Psi_j$ is a $1$-periodic continuously differentiable function.

  Inserting \eqref{eq:Pseudo-Tauber:S-decomposition}
  in~\eqref{eq:Pseudo-Tauber:L-decomposition} and extracting the coefficient of
  $Z^{m-1}$ using Cauchy's theorem and the analyticity of the error in $Z$ yields~\eqref{eq:pseudo-Tauber-relation}
  with $c=[Z^{m-1}]\bigl(c'(Z) + \frac{I(1, Z)}{1-\f{Q}{Z}}\bigr)$.
  Rewriting
  \eqref{eq:Pseudo-Tauber:Fourier-Coefficients-GF} in terms of $\Psi_j$ and $\Phi_j$ leads to~\eqref{eq:pseudo-Tauber-Fourier}.
  Note that we have to add $\Oh{Z^m}$ in~\eqref{eq:pseudo-Tauber-Fourier} to
  compensate the fact that we do not include $\psi_{j\ell}$ for $j\ge m$.
\end{proof}

\movesymmetric

\clearpage

\bibliography{bib/cheub}

\providecommand{\Submitted}{Submitted} \providecommand{\availableat}{ available
  at } \providecommand{\alsoavailableat}{ also available at }
  \providecommand{\evavailableat}{earlier version available at }
  \providecommand{\toappearin}{To appear in } \providecommand{\toappear}{to
  appear} \providecommand{\inpreparation}{in preparation}
  \providecommand{\doi}[1]{\href{http://dx.doi.org/#1}{\path{doi:#1}}}
  \providecommand{\lowercaseforams}{}
  \providecommand{\etc}{\emph{etc.}}\def\cprime{$'$}
\providecommand{\bysame}{\leavevmode\hbox to3em{\hrulefill}\thinspace}
\providecommand{\MR}{\relax\ifhmode\unskip\space\fi MR }
% \MRhref is called by the amsart/book/proc definition of \MR.
\providecommand{\MRhref}[2]{%
  \href{http://www.ams.org/mathscinet-getitem?mr=#1}{#2}
}
\providecommand{\href}[2]{#2}
\begin{thebibliography}{10}

\bibitem{Allouche-Shallit:1992:regular-sequences}
Jean-Paul Allouche and Jeffrey Shallit,
  \href{http://dx.doi.org/10.1016/0304-3975(92)90001-V}{\emph{The ring of
  $k$-regular sequences}}, Theoret. Comput. Sci. \textbf{98} (1992), no.~2,
  163--197. \MR{1166363}

\bibitem{Allouche-Shallit:2003:autom}
\bysame, \href{http://dx.doi.org/10.1017/CBO9780511546563}{\emph{Automatic
  sequences: Theory, applications, generalizations}}, Cambridge University
  Press, Cambridge, 2003. \MR{1997038 (2004k:11028)}

\bibitem{Koninck-Doyon:2009:esthetic-numbers}
Jean-Marie De~Koninck and Nicolas Doyon, \emph{Esthetic numbers}, Ann. Sci.
  Math. Qu\'ebec \textbf{33} (2009), no.~2, 155--164. \MR{2729807}

\bibitem{Drmota-Grabner:2010}
Michael Drmota and Peter~J. Grabner,
  \href{http://dx.doi.org/10.1017/CBO9780511777653.010}{\emph{Analysis of
  digital functions and applications}}, Combinatorics, automata and number
  theory (Val\'{e}rie Berth\'{e} and Michel Rigo, eds.), Encyclopedia Math.
  Appl., vol. 135, Cambridge University Press, Cambridge, 2010, pp.~452--504.
  \MR{2759112 (2012d:11195)}

\bibitem{Drmota-Szpankowski:2013:divide-and-conquer}
Michael Drmota and Wojciech Szpankowski,
  \href{http://dx.doi.org/10.1145/2487241.2487242}{\emph{A master theorem for
  discrete divide and conquer recurrences}}, J. ACM \textbf{60} (2013), no.~3,
  Art.~16, 49~pp. \MR{3078703}

\bibitem{Dumas:2013:joint}
Philippe Dumas, \href{http://dx.doi.org/10.1016/j.laa.2012.10.013}{\emph{Joint
  spectral radius, dilation equations, and asymptotic behavior of
  radix-rational sequences}}, Linear Algebra Appl. \textbf{438} (2013), no.~5,
  2107--2126.

\bibitem{Dumas:2014:asymp}
\bysame, \href{http://dx.doi.org/10.1016/j.tcs.2014.06.036}{\emph{Asymptotic
  expansions for linear homogeneous divide-and-conquer recurrences: Algebraic
  and analytic approaches collated}}, Theoret. Comput. Sci. \textbf{548}
  (2014), 25--53.

\bibitem{Dumas-Lipmaa-Wallen:2007:asymp}
Philippe Dumas, Helger Lipmaa, and Johan Wall\'en,
  \href{https://dmtcs.episciences.org/399}{\emph{Asymptotic behaviour of a
  non-commutative rational series with a nonnegative linear representation}},
  Discrete Math. Theor. Comput. Sci. \textbf{9} (2007), no.~1, 247--272.
  \MR{2350267}

\bibitem{Flajolet-Grabner-Kirschenhofer-Prodinger:1994:mellin}
Philippe Flajolet, Peter Grabner, Peter Kirschenhofer, Helmut Prodinger, and
  Robert~F. Tichy,
  \href{http://dx.doi.org/10.1016/0304-3975(92)00065-Y}{\emph{Mellin transforms
  and asymptotics: digital sums}}, Theoret. Comput. Sci. \textbf{123} (1994),
  291--314. \MR{94m:11090}

\bibitem{Grabner-Heuberger:2006:Number-Optimal}
Peter~J. Grabner and Clemens Heuberger,
  \href{http://dx.doi.org/10.1007/s10623-005-6158-y}{\emph{On the number of
  optimal base 2 representations of integers}}, Des. Codes Cryptogr.
  \textbf{40} (2006), no.~1, 25--39. \MR{2226281 (2007f:11008)}

\bibitem{Grabner-Heuberger-Prodinger:2005:counting-optimal-joint}
Peter~J. Grabner, Clemens Heuberger, and Helmut Prodinger,
  \href{http://www.integers-ejcnt.org/vol5-3.html}{\emph{Counting optimal joint
  digit expansions}}, Integers \textbf{5} (2005), no.~3, A9. \MR{2191755
  (2006i:11008)}

\bibitem{Heuberger-Krenn-Prodinger:2018:pascal-rhombus}
Clemens Heuberger, Daniel Krenn, and Helmut Prodinger,
  \href{http://dx.doi.org/10.4230/LIPIcs.AofA.2018.27}{\emph{Analysis of
  summatory functions of regular sequences: Transducer and {P}ascal's
  rhombus}}, Proceedings of the 29th International Conference on Probabilistic,
  Combinatorial and Asymptotic Methods for the Analysis of Algorithms
  (Dagstuhl, Germany) (James~Allen Fill and Mark~Daniel Ward, eds.), Leibniz
  International Proceedings in Informatics (LIPIcs), vol. 110, Schloss
  Dagstuhl--Leibniz-Zentrum fuer Informatik, 2018, pp.~27:1--27:18.

\bibitem{Heuberger-Kropf-Prodinger:2015:output}
Clemens Heuberger, Sara Kropf, and Helmut Prodinger,
  \href{http://www.combinatorics.org/ojs/index.php/eljc/article/view/v22i2p19}{\emph{Output
  sum of transducers: Limiting distribution and periodic fluctuation}},
  Electron. J. Combin. \textbf{22} (2015), no.~2, 1--53. \MR{3359922}

\bibitem{Hwang-Janson-Tsai:2017:divide-conquer-half}
Hsien-Kuei Hwang, Svante Janson, and Tsung-Hsi Tsai,
  \href{http://dx.doi.org/10.1145/3127585}{\emph{Exact and asymptotic solutions
  of a divide-and-conquer recurrence dividing at half: Theory and
  applications}}, ACM Trans. Algorithms \textbf{13} (2017), no.~4, Art.~47,
  43~pp.

\bibitem{OEIS:2018}
\emph{The {O}n-{L}ine {E}ncyclopedia of {I}nteger {S}equences},
  \url{http://oeis.org}, 2018.

\end{thebibliography}
\bibliographystyle{bibstyle/amsplainurl}

\clearpage
\appendix

\section{Proof of Proposition~\ref{proposition:symmetric-eigenvalues}}
\label{sec:proof-symmetric-eigenvalues}

\begin{proof}[Proof of Proposition~\ref{proposition:symmetric-eigenvalues}]
  We set
  \begin{equation*}
    j_0\coloneqq \floor[\bigg]{-\frac{p\bigl(\pi+\arg(\lambda)\bigr)}{2\pi}}+1
  \end{equation*}
  with the motive that
  \begin{equation*}
    -\pi<\arg(\lambda) + \frac{2j\pi}{p}\le \pi
  \end{equation*}
  holds for $j_0\le j<j_0+p$.
  This implies that for $j_0\le j<j_0+p$, the $p$th root of unity~$\zeta_j\coloneqq \exp(2j\pi i/p)$
  runs through the elements of $U_p$ such
  that $\log_q(\lambda \zeta_j)=\log_q(\lambda) +  2j\pi i/(p\log q)$.
  Then
  \begin{align*}
    N^{\log_q(\zeta_j\lambda)}
    &= N^{\log_q \lambda} \exp\Bigl(\frac{2j\pi i}{p}\log_q N\Bigr)\\
    &= N^{\log_q \lambda} \exp(2j\pi i\log_{q^p} N)
    = N^{\log_q \lambda} \exp(2j\pi i\fractional{\log_{q^p} N}).
  \end{align*}
  We set
  \begin{equation*}
    \Phi(u)\coloneqq \sum_{j_0\le j<j_0+p} \exp\Bigl(\frac{2j\pi i}{p}u\Bigr)\Phi_{(\zeta_j\lambda)k}(u),
  \end{equation*}
  thus $\Phi$ is a $p$-periodic function.

  For the Fourier series expansion, we get
  \begin{multline*}
    \Phi(u)=\sum_{\ell\in\Z} \sum_{j_0\le j<j_0+p}
    \Res[\bigg]{\calD(s)
      \Bigl(s - \log_q \lambda - \frac{2(\ell+\frac{j}{p})\pi i}{\log q}\Bigr)^k}%
    {s=\log_q \lambda + \frac{2(\ell+\frac{j}{p})\pi i}{\log q}} \\
    \times \f[\Big]{\exp}{2\pi i \Bigl(\ell+\frac{j}{p}\Bigr)u}
  \end{multline*}
  Replacing $\ell p+j$ by $\ell$ leads to the Fourier series claimed in the
  proposition.
\end{proof}

\section{Proof of Theorem~\ref{theorem:esthetic:asy}}
\label{sec:proof-esthetic}

\moveestheticproof

\end{document}

%%% Local Variables:
%%% mode: latex
%%% TeX-master: t
%%% End:

% LocalWords:  Hölder Allouche Shallit eq ary Delange Fekete's